\newcolumntype{P}[1]{>{\centering\arraybackslash}p{#1}}
\newcommand\norm[1]{\left\lVert#1\right\rVert}
\newtheorem{definition}{Definition}
\newtheorem{lemma}{Lemma}
\newtheorem{thm}{Theorem}
\title{\vspace{-2.5cm} \hspace{-0.125cm}A Distributionally Robust Area Under Curve Maximization Model}
\author{Wenbo Ma\footnote{Data scientist at AvalonBay Communities (AVB), {\tt wenboma2011@gmail.com}.   All views expressed in this paper are his own and do not represent the opinions of AVB.},  \ 
Miguel A. Lejeune\footnote{Corresponding Author. Department of Decision Sciences, The George Washington University, {\tt mlejeune@gwu.edu}.}
}
\date{}
\begin{document}

\maketitle

\nocite{*}
\vspace{-0.5cm}
\begin{abstract}
Area under ROC curve (AUC) is a widely used performance measure for classification models. 
We propose two new distributionally robust AUC maximization models (DR-AUC) that rely on the Kantorovich metric and approximate the AUC with the hinge loss function. We consider the two cases with respectively fixed and variable support for the worst-case distribution.
We use duality theory to reformulate the DR-AUC models and derive tractable convex optimization problems.
The numerical experiments show that the proposed DR-AUC models -- benchmarked with the standard deterministic AUC and the support vector machine models - perform better in general and in particular improve the worst-case out-of-sample performance over the majority of the considered datasets, thereby showing their robustness. 
The results are particularly encouraging since our numerical experiments are conducted with training sets of small size which have been known to be conducive to low out-of-sample performance. \\
Key words: Distributionally Robust optimization, Area under the Curve, Wasserstein Distance, Machine Learning
\end{abstract}

\vspace{-0.25cm}
\section{Introduction}
\vspace{-0.15cm}
Area under the receiver operating characteristic (ROC) curve (AUC) has been extensively used as a performance measure for classification models in machine learning. For example, most recent classification competitions at Kaggle \cite{KAGGLE} 
use AUC as the only evaluation metric to rank contestants and to decide prize up to hundreds of thousands of dollars. 
From a theoretical point of view, 
AUC has been showed to be statistically more consistent and more discriminating than accuracy when comparing classification models \cite{aucbetter,CortesAUC}. 
In practice, AUC is a better performance metric than accuracy when class distributions are highly imbalanced \cite{5}. 
Such a situation is common in applications, such as disease diagnosis, transaction fraud detection, and churn management, to name a few. 
\citeauthor{5} \cite{5} provide an illustration from the telecommunications industry, in which monthly churn rates are typically around 2\%. The trivial solution of labeling all customers as non-churners yields a 98\% accuracy. AUC is a performance metric that can avoid such a trivial classification since it differentiates errors made on data points with positive and negative labels, respectively (see \cite{aucbetter}). 

AUC maximization models  minimize an empirical loss function assuming that the empirical distribution is representative of the true unknown population (see, e.g., \cite{1,4,5}). As it is not always the case, in particular when the sample size is small, it is not uncommon to see AUC models with satisfactory in-sample performance have poor out-of-sample performance. Distributionally robust optimization (DRO), as a paradigm for optimization under uncertainty, has been recently presented as an effective way to remedy this issue and to improve out-of-sample performance of machine learning models (see, e.g., \cite{2,3,DROLog,droluo,chen2017robust} and the references therein). In this paper, we introduce DRO into the formulation of an AUC maximization model as an  attempt to improve our-of-sample performance.


In this study, we propose  
two new distributionally robust optimization (DRO) models for AUC maximization, in which AUC is approximated via the hinge loss function. To our knowledge, there is no published study proposing a DRO model for AUC maximization.
The optimization in the new DRO AUC maximization models are carried out over an ambiguity set of probability distributions constructed with the Kantorovich metric, also known as the order-1 Wasserstein metric. We consider the two cases with fixed and variable support of the  the worst-case distribution. We  propose a duality-based reformulation method for the min-max DRO AUC problem, which provides  computationally tractable, convex optimization reformulations. We conduct a comprehensive set of experiments to assess the out-of-sample performance of the proposed DRO AUC maximization models. The focus of the tests is the worst-case out-of-sample performance of the models when the training samples are of small size.

The remainder of this paper is organized as follows. In Section \ref{PREL}, we review the key concepts and literature related to distributionally robust optimization and AUC-based classification. 
In Section \ref{MOD}, we propose the distributionally robust AUC (DR-AUC) models and derive computationally tractable reformulations.
In Section \ref{TEST}, we test the proposed DR-AUC models on several datasets of UCI machine learning repository and benchmark the out-of-sample performance of the DR-AUC models with the deterministic AUC model and the soft-margin support vector machine.

\vspace{-0.2cm}
\section{Preliminaries} \label{PREL}
\subsection{Distributionally Robust Optimization}

Stochastic programming and robust optimization are two established frameworks for optimization under uncertainty. Distributionally robust optimization, first introduced by \citeauthor{scarf} \cite{scarf}, can be viewed as a third paradigm for optimization under certainty that seeks to bridge stochastic programming and robust optimization. 

Consider a generic loss function $L_G(w^G,x^G)$ depending on the decision variables $w^G$ and subjected to uncertainty represented by the random variables $x^G$ following a distribution $q$. 
In general, a stochastic programming model minimizes the expected value of the loss function
\begin{equation} 
\label{eqintro}
\min_{w^G} \mathbb E_{q}[L_G(w^G,x^G)] \ ,
\end{equation}
and assumes that the distribution $q$ is known. 
This assumption may not always hold in practice. On the other hand, robust optimization does not include probabilistic information and considers uncertainty via the concept of uncertainty set. The generic formulation of a robust optimization problem is:
\begin{equation} \label{eqro}
\min_{w^G} \max_{x^G \in \mathcal U} \ L_G(w^G,x^G) 
\end{equation}
where $\mathcal U$ is the uncertainty set in the uncertain coefficient space.
However, robust optimization models are often overly conservative.
Distributionally robust optimization combines ingredients from stochastic programming and robust optimization and is based on the construction of an ambiguity set in the space of probability distributions. The generic DRO model reads: 
\begin{equation} \label{eqdro}
\min_{w^G} \max_{q \in Q} \mathbb E_{q}[L_G(w^G,x^G)] 
\end{equation}
where $Q$ is the ambiguity set (see Definition \ref{def2}). Two forms of ambiguity sets prevail in the literature. 
The first type is based on moment conditions (see, e.g., \cite{delageye,3}) and assumes that the candidate distributions in the set have moments close to those of the so-called reference distribution. 
The second type is based on a probability distance function (see, e.g., \cite {2,DROLog}). In this setting, all distributions in the ambiguity set are within a prescribed distance from a reference distribution.
Several probability distance functions have been studied in literature. Among them are the Kantorovich metric (also known as degree-1 Wasserstein metric or earth mover's distance), the Kullback-Leibler (KL) divergence, 
and $\chi^2$-distance, to name a few.

In this paper, we construct a distance-based ambiguity set using the Kantorovich metric (see Definition \ref{def1}). 
Several reasons motivate this choice.  First, in a distance-based ambiguity set, modelers can control the conservatism of the problem by adjusting the size of the ambiguity set \cite{rjirrr}. Second, the optimal value of a DRO problem with a Kantorovich ambiguity set provides an upper confidence bound on the out-of-sample performance \cite{kuhnprob}. Third, the Kantorovich ambiguity set often leads to a tractable reformulation.

\vspace{-0.2cm}
\subsection{Area under ROC curve (AUC)} \label{AUCROC}

The AUC concept is closely associated with the concept of Receiver Operating Characteristics (ROC) curve, which is a two-dimensional measure of classification performance.
The AUC concept represents the area under the ROC curve. It is mathematically equivalent to the Wilconxon-Mann-Whitney statistic~\cite{5}: 
\begin{equation} \label{auc}
\text{AUC} = \frac{\sum_{i=1}^{N^+} \sum_{j=1}^{N^-} \mathbbm{1}_{f(x^{+}_{i}) \geq f(x^{-}_{j})}}{M} 
\end{equation}
where $x^+_{i}$ and $x^-_{j}$ denote data points with positive and negative labels, respectively, $f$ is a generic classification model, $\mathbbm{1}$ is an indicator function equal to 1 when $f(x^+)\geq f(x^-)$ and to  0 otherwise, $N^+$ (resp, $N^-$) is the number of data points with positive (resp., negative) labels, 
and $M=N^{+}N^{-}$ is the number of pairs of points with opposite labels ($x^{+}$,$x^{-}$).
The numerator in \eqref{auc} counts how many times the classification model $f$ assigns a larger value to a data point with a positive label than to a point with a negative label. The denominator is the number of pairs of points with opposite labels. It follows from the above definition that AUC measures the probability that a classification model assigns a larger value to a randomly selected positive data point than to a negative one \cite{rocfigure}. Its value ranges from 0 to 1. The higher the value, the better the model is. A random guess model has 0.5 as its AUC value. 

The horizontal axis in a ROC curve (Figure \ref{FIG1}) represents the  false positive rate (FPR) and the vertical axis corresponds to the true positive rate (TPR). 
The ROC curve of a classification model plots the TPR, also known as the sensitivity, against the FPR, also known as probability of false alarm or complement of the specificity, at various threshold levels. In other words, ROC describes the probability of a model correctly classifying a positive instance against incorrectly classifying a negative instance \cite{4}. 
\vspace{-0.3cm}
\begin{figure}[H] 
	\centering
	\includegraphics[scale=0.3125]{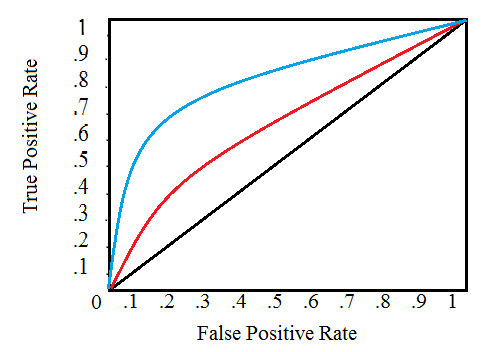}
	\vspace{-0.1in}
	\caption{ROC curve (taken from \cite{rocfigure}).
	}
	\label{FIG1}
\end{figure}
\vspace{-0.2cm}
The ROC and AUC concepts can be conveniently used to compare classification models.
Figure \ref{FIG1} displays the ROC curve for the two models A (blue line) and B (red line). Given a fixed FPR, say 0.3, model A has a higher TPR than model B and A is hence considered to be better than B at the 30\% FPR. Furthermore, it can be seen that model A generally dominates model B because A's ROC curve is above B's ROC curve for every FPR level. When no dominance relationship of one model over another can be established, one resorts to the AUC metric to compare the (classification performance of the) models.

\vspace{-0.2cm}
\subsection{\textbf{Deterministic AUC Maximization Model}}


In this section, we introduce the standard formulation of the deterministic AUC maximization model. 
In Section \ref{AUCROC}, we showed that maximizing AUC comes to maximizing the number of pairs with opposite labels in which the positive data point is attributed a larger value than the negative one. Alternatively, it is equivalent to minimizing the number of pairs with opposite labels in which the negative data point is attributed a larger value than the positive one, which is the view explicitly modelled next:
\begin{equation}
\label{aucmin}
\underset{w}{\text{min}} \
\frac{1}{M} \sum_{i=1}^{N^+} \sum_{j=1}^{N^-} \mathbbm{1}_{\{f(x^+_{i}) \leq  f(x^-_{j})\}} \ .
\end{equation}
The standard  AUC maximization model assumes that the classification function $f$ is linear, taking the form $f = w^Tx_i$, where $w^T$ is a decision vector of length $d$ and each data point  $x_i$ is characterized by a $d$-dimensional vector of features. 
 In other words, a data point $x_i$ is represented by a feature vector with each component being the feature's value. Each component $w_k$ of the decision vector $w$ multiplies the corresponding feature value $x_{ik}$ taken by point $x_i$ on feature $d$. We can rewrite \eqref{aucmin} as follows:
\begin{equation}
\label{aucsvm1.5}
\underset{w}{\text{min}} \
\frac{1}{M} \sum_{i=1}^{N^+} \sum_{j=1}^{N^-} \mathbbm{1}_{\{w^T x_{i}^+ \leq w^Tx_{j}^{-}\}} \ .
\end{equation}

Two challenges are regularly reported (see, e.g., \cite{4}) about the above model. First, the indicator function in the objective is non-differentiable and non-convex. Second, the solution to the problem may not be unique. There could be multiple optimal solutions which allow the separation of the positive and negative points with identical accuracy. To alleviate those issues, \citeauthor{4} \cite{4} proposes a constrained optimization problem obtained by introducing auxiliary non-negative decision variables $\xi_{ij}$ that permits to approximate \eqref{aucsvm1.5}: 
\vspace{-0.2cm}
\begin{subequations} \label{aucsvm2}
	\begin{align}
\underset{w, \xi}{\text{min}}
& \; \frac{1}{2} \norm{w}^{2}
+ \frac{C}{M} \sum_{i=1}^{N^+} \sum_{j=1}^{N^-} \xi_{ij} &    \label{aucsvm2-1}\\
\text{s.t} \; & \; \xi_{ij} \geq 1-(w^T x_{i}^{+} - w^T x_{j}^-) \ , &  \forall i=1,\ldots,N^+, \forall j = 1,\ldots,N^- \label{aucsvm2-2}\\
& \; \xi_{ij} \geq 0, & \forall i=1,\ldots,N^+,  \forall j = 1,\ldots,N^- \label{aucsvm2-3}
\end{align} 
\end{subequations}
Along with the minimization objective function, constraints \eqref{aucsvm2-2} and \eqref{aucsvm2-3} enforce that each variable $\xi_{ij}$ is equal to the hinge loss 
defined as $\max\{0,1-(w^Tx_i^{+}-w^Tx_j^{-})\}$ for a given $(x_i^+,x_j^-)$ pair. The hinge loss is equal to 0 if the score difference ($w^Tx_{i}^{+}-w^Tx_{j}^{-}$)  is at least 1, and is equal to 1 minus the score difference otherwise. 
The above model is called ROC optimizing SVM in \cite{4} and AUC maximizing SVM in \cite{1}. 
Problem \eqref{aucsvm2} can be viewed as a relaxed epigraphic formulation in which the nonconvex, integer-valued objective function \eqref{aucsvm1.5} is approximated by the hinge loss function and moved to the constraint set.
Problem \eqref{aucsvm2} can be equivalently reformulated as the unconstrained problem: 
\begin{equation} 
\label{aucsvm4}
\begin{aligned}
\text{\bf{D-AUC}}: \quad & \underset{w }{\text{min}}
& & \frac{1}{2} \norm{w}^{2}
+ \frac{C}{M} \sum_{i=1}^{N^+} \sum_{j=1}^{N^-} \max\{0,1-(w^T x_{i}^{+} - w^T x_{j}^-)\} 
\end{aligned}
\end{equation}
in which the $L_2$ norm term  $\norm{w}^{2}$ remedies the non-unique solution issue. 
The incorporation of the norm $\norm{w}^{2}$ breaks the ties between the (possibly) multiple optimal solutions in \eqref{aucsvm1.5}, as it leads to the selection of the model with maximal margin. We refer to  
\cite{svmsoft} for details about margin and optimal margin classification. The objective function \eqref{aucsvm1.5} is approximated by the hinge loss function $\sum_{i=1}^{N^+} \sum_{j=1}^{N^-} \max\{0,1-(w^T x_{i}^{+} - w^T x_{j}^-)\}$.   
The tuning parameter $C$ controls the trade-off between the $L_2$ norm and the loss. In the remainder of the manuscript, we refer to model \eqref{aucsvm4} as the deterministic AUC (D-AUC) maximization model, which resembles the following
support vector machine (SVM) \cite{svm-hard,svmsoft} model:
\vspace{-0.05cm}
\begin{equation} 
\label{softsvm}
\textbf{SVM}: \quad  \underset{w,b}{\text{min}} \  \frac{1}{2}\norm{w}^{2} + \frac{C}{N} \sum_{j=1}^{N} \ {\max\{0,1-y_j(w^Tx_j+b)\}} \ .
\end{equation} 
The notation $N = N^+ + N^-$ refers to the total number of data points, $y_j \in \{+1,-1\}$ is the label of each data point $j$ and $b$ is a decision variable representing the intercept of SVM's separating hyperplane and the origin. 
Although close, the AUC maximization model \eqref{aucsvm4} and the SVM model \eqref{softsvm} have significant differences. 
First, in the SVM model, the labels' values appear explicitly in the loss function $y_{j}(w^Tx_j+b)$ whereas the labels $y_j$ are not in the objective function of the D-AUC maximization model, which implicitly splits the data points into two groups (positive and negative) based on their labels and compares the value of $w^T x_{i}^{+}$ and $w^T x_{j}^-$ for each pair $(x_i^+,x_j^-)$. 
Second, for a training set of size $N= N^++N^-$, the summand operation in the objective function of the SVM model \eqref{softsvm} is carried over $N$ terms. In contrast, the summation in the D-AUC maximization model \eqref{aucsvm4} contains a (much) larger number $M =N^+ N^-$~of~terms.

\vspace{-0.2cm}
\subsection{Literature Review}

Distributionally robust machine learning models have recently been proposed in the literature. \citeauthor{3} \cite{3} propose three different distributionally robust models for ordinary least square (OLS) regression problems and use ambiguity sets based on moments and distance metrics. 
Shafieezadeh-Abadeh et al. \cite{DROLog} propose a distributionally robust logistic regression model using the Wasserstein ambiguity set. They further propose a distributionally robust approach to compute upper and lower confidence bounds on the misclassification probability. \citeauthor{2} \cite{2} study a distributionally robust framework for support vector machine using the Kantorovich distance and reformulate the model as a semi-infinite program solved with a cutting-surface algorithm. We refer the interested reader to   \citeauthor{kuhn2019wasserstein} \cite{kuhn2019wasserstein}, \citeauthor{rahimian2019distributionally} \cite{rahimian2019distributionally}, and Shafieezadeh-Abadeh et al. \cite{sorooshml2019regularization} for recent and thorough reviews on distributionally robust optimization and its applications in machine learning. 

Our work differs from the above literature in that the proposed DR-AUC maximization models use a different loss function. 
In particular, the proposed loss function  
	\eqref{aucsvm4} 	is defined on pairs of data points $(x^+_i,x^-_j)$ 
	and each term involves the comparison of each data point in the pair. This is a major difference with the loss functions in the OLS \cite{3}, SVM \cite{2} and logistics regression \cite{DROLog} studies above-mentioned, in which each term of the loss function corresponds to a point and not - as here - to a pair of points with different labels.
In other words, the proposed DR-AUC models optimize the pairwise loss between two data points from opposite classes while the loss functions in the DRO machine learning literature calculate the sum of the losses over each individual data point.

The mathematical expression of AUC can be written in analytical form and is -- as above-mentioned  -- integer-valued, non-differentiable, and non-convex.
That is why many studies optimize an approximate function of AUC that is easier to solve numerically. \citeauthor{5} \cite{5} propose a truncated quadratic function to approximate AUC and use gradient based methods to train the model.  \citeauthor{1} \cite{1} and \citeauthor{4} \cite{4} use  a hinge loss function to derive an approximation in their AUC models. Similar to SVM models, they also add a $L_{2}$ regularization term in the objective function. Recently  \citeauthor{Norton2018} \cite{Norton2018} propose a novel metric called Buffered AUC (bAUC). They show that bAUC is the tightest concave lower bound of AUC and that the optimization of bAUC can often be reformulated as a convex or even linear problem. It is important to note that all the above models in \cite{1,4,5} are deterministic optimization models without explicit consideration for data uncertainty. 
This is a significant difference with the proposed DR-AUC models that explicitly account for data uncertainty, more precisely uncertainty related to the features of the data points, by using a DRO approach. Not only do we assume that the features characterizing the data points are uncertain, we also consider that their probability distribution is only known imperfectly. 

\vspace{-0.25cm}
\section{DR-AUC Models and Reformulations} \label{MOD}

According to model (\ref{aucsvm4}), the D-AUC maximization model is:
\begin{equation} \label{aucsvmsec3}
\begin{aligned}
& \underset{w}{\text{min}}
& & \frac{1}{2} \norm{w}^{2}
+ \frac{C}{M} \sum_{i=1}^{N^+} \sum_{j=1}^{N^-} \max\{0,1-(w^T x_{i}^{+} - w^T x_{j}^-)\} \ .
\end{aligned}
\end{equation}

The second term $\frac{1}{M}\sum_{i}\sum_{j}\max\{0,1-(w^T x_{i}^{+} - w^T x_{j}^-)\}$ in \eqref{aucsvmsec3} can be interpreted as the empirical risk assuming that the realized sample follows a uniform distribution in which each atom, represented by the tuple $(x^+,x^-)$, has a probability weight $\frac{1}{M}$. If  $\hat{p}$ refers to the empirical distribution and $h(w;x^+,x^-) = \max\{0,1-(w^T x_{i}^{+} - w^T x_{j}^-)\}$, model \eqref{aucsvmsec3} can be rewritten as 
\begin{equation} \label{empirical loss}
\underset{w}{\text{min}} \ \frac{1}{2}\norm{w}^{2} + C E_{\hat{p}}[h(w;x^{+},x^{-})]
\end{equation}
The primary goal of a classification model is to minimize its expected misclassification risk on unseen data. One of the assumptions of model \eqref{empirical loss} is that the unseen data follow the uniform distribution described above. It is however rarely the case, which can lead to a high classification performance on the training set and to a much weaker prediction performance, i.e., performance on unseen data in the test set.  
To relax the assumption that $\hat{p}$ defines the unknown population, we consider a set  $\mathcal{P}$ of distributions, called ambiguity set, and the optimization (and classification) is carried over the worst-case distribution $p^*$ in $\mathcal{P}$ that maximizes the expected loss $E_{p}[h(w;x^{+},x^{-})]$:
\begin{equation} \label{eq:1}
\underset{w}{\text{min}} \ \frac{1}{2}\norm{w}^{2} + C \sup_{p \in \mathcal{P} }E_{p}[h(w;x^{+},x^{-})]    \ .
\end{equation}
We call model \eqref{eq:1} distributionally robust AUC (DR-AUC), since this min-max problem assumes that the probability distribution of the tuple $z=(x^+,x^-)$, hereafter referred to as an atom, is imperfectly known and aims to hedge against the worst-case distribution in the set $\mathcal{P}$ of plausible distributions.  
The ambiguity set $\mathcal{P}$ in \eqref{eq:1} is a set of unknown probability distributions defined using the Kantorovich metric, which we define now along with the Kantorovich metric-based ambiguity set.

\begin{definition} {\label{def1}}
    (Kantorovich metric) The Kantorovich metric is a distance function between two probability distributions $p_1$ and $p_2$  
    \begin{subequations} \label{kmetric}
    	\begin{align}
        d_p(p_1,p_2) =  \inf_{K}  &\int_{Z_1 \times Z_2} \norm{z_1-z_2}K(dz_1,dz_2) \label{kmetric-1}\\ 
       \text{s.t} & \int_{Z_2}K(z_1,dz_2) = p_1 \label{kmetric-2}\\
       & \int_{Z_1}K(dz_1,z_2) = p_2 \label{kmetric-3} 
    	\end{align} 
    \end{subequations}
where $Z_1$ and $Z_2$ are the supports of $p_1$ and $p_2$,  $z_1$ and $z_2$ are the atoms of $Z_1$ and $Z_2$, $\norm{\cdot}$ is a norm, and $K$ is a
 joint density function defined over pairs of atoms ($z_1,z_2$). 
\end{definition}
The objective function \eqref{kmetric-1} minimizes the total cost of transporting probability mass between $p_1$ and $p_2$ and $\norm{z_1-z_2}K(dz_1,dz_2)$ is the unit transportation cost between $p_1$ and $p_2$. 
The constraints \eqref{kmetric-2} and \eqref{kmetric-3} enforce the requirements on the marginal distributions $p_1$ and $p_2$.
The Kantorovich metric can be interpreted as the minimum cost for transporting probability mass between $p_1$ and $p_2$ and the optimal joint density function $K$ defines the corresponding transportation plan.

\begin{definition} \label{def2}
    (Kantorovich ambiguity set) The Kantorovich ambiguity set is a set of probability distributions $p$ that are within a ball of radius $\epsilon$ from a reference distribution $\hat{p}$:
    \begin{equation} \label{set}
    \begin{aligned}
        \mathcal{P} = \{p \ | \ d_p(p,\hat{p}) \leq \epsilon \}
    \end{aligned}
    \end{equation}
    where $d_p(\cdot,\cdot)$ refers to the Kantorovich metric 
    and $\epsilon$ is the radius of the Kantorovich ambiguity set.
\end{definition}
\noindent
The parameter $\epsilon$ can be used to control the size of the ambiguity set and the conservatism level; $\epsilon$ is also related to the probabilistic guarantee that the ambiguity set contains the true distribution (see \cite{kuhnprob}).

In this paper, we consider two cases for the worst-case distribution. First, we assume that any distribution in the ambiguity set $\mathcal{P}$ has the same finite number of atoms as the reference one and that the locations of these atoms are identical to the ones in the reference distribution. The worse-case distribution is found by changing the probability mass associated with each atom. We call the resulting model DR-AUC with fixed support (DR-AUC-F). Second, we assume that neither the number of atoms nor their locations are known. We name the corresponding model DR-AUC-V with variable support. In what follows we provide the reformulation for each case. 

\subsection{DR-AUC Model with Fixed Support}
\label{DR-AUC-Fsection}
In the fixed support case, we use the empirical uniform distribution with probability $\frac{1}{M}$ as the reference distribution. 
We denote by $\hat{p}$ the empirical distribution with each atom having probability mass $\frac{1}{M}$, by $p$ any distribution in 
 $\mathcal{P}$, and by $p^*$ the worst-case distribution in $\mathcal{P}$. Since $p$  and $\hat{p}$ are assumed to have the same known atoms, we denote $\hat{Z}$ as the support and $\hat{z}=(\hat{x}^{+},\hat{x}^{-})$ as the atoms of both $p$  and $\hat{p}$. 

%


%
The DR-AUC-F model reads now: 
\begin{equation}\label{case1t}
\textbf{DR-AUC-F}: \quad \underset{w}{\text{min}} \ \big[ \frac{1}{2}\norm{w}^{2} + C\max_{p \in \mathcal{P} } \sum_{\hat{z} \in \hat{Z}}h(w;\hat{z})p(\hat{z}) \big]
\vspace{-0.24cm}
\end{equation}
\vspace{-0.14cm}
and the ambiguity set $\mathcal{P}$ is defined as:
\begin{equation}\label{case1}
\begin{aligned}
\mathcal{P} = \{p \geq 0 \ | \ \exists K \geq 0 \ \text{s.t.} 
&\sum_{i=1}^{M} K(\hat{z}_{i},\hat{z}_{j}) = p(\hat{z}_{j}), \quad \forall j \in \{1,\ldots,M\} \\
&\sum_{j=1}^{M} K(\hat{z}_{i},\hat{z}_{j}) = \frac{1}{M}, \quad \forall i \in \{1,\ldots,M\} \\
&\sum_{i=1}^{M} \sum_{j=1}^{M} d_z(\hat{z}_{i},\hat{z}_{j})K(\hat{z}_{i},\hat{z}_{j}) \leq \epsilon \ 
\} \ .
\end{aligned}
\end{equation}
The notation $p(\hat{z}_{j})$ refers to the unknown probability of the $j^{th}$ atom in $p$ and 
$d_z(\hat{z}_i,\hat{z}_j) = \norm{\hat{x}^+_{i}-\hat{x}^+_{j}}_{1} + \norm{\hat{x}^-_{i}-\hat{x}^-_{j}}_{1}$ represents the distance between $\hat{z}_i$ and $\hat{z}_{j}$. 
To simplify the notation, we use thereafter  $K_{ij}$ instead of $K(\hat{z}_{i},\hat{z}_{j})$, $p_{j}$  instead of $p(\hat{z}_{j})$, and $h_j$  instead of
$h(w;\hat{z}_{j})$.
Inserting the representation \eqref{case1} of the ambiguity set $\mathcal{P}$  into \eqref{case1t}, the inner maximization problem in \eqref{case1t} takes the form of the following linear programming problem
\begin{subequations}\label{innerprimal}
\begin{align}
\label{innerprimalobj}
\underset{K_{ij} \geq 0}{\text{max}} & \quad \sum_{j=1}^{M} h_{j} \sum_{i=1}^{M} K_{ij} & \\
\label{case1eqct1}
\text{s.t.} & \quad \sum_{j=1}^{M} K_{ij} = \frac{1}{M}  & \forall i \in \{1,\ldots,M\} \\
\label{case1ieqct3}
& \quad \sum_{i=1}^{M} \sum_{j=1}^{M} d_z(\hat{z}_{i},\hat{z}_{j})K_{ij} \leq \epsilon  &
\end{align}
\end{subequations}
with decision variables $K_{ij}$.

\begin{thm}
	\label{THM1}
Let $t_i$ be the sign-unrestricted Lagrangian multipliers associated to the equality constraints \eqref{case1eqct1} and let $\lambda$ be the non-negative Lagrangian multiplier associated to the inequality constraint\eqref{case1ieqct3}.
	The dual of $\eqref{innerprimal}$ is the linear problem:
	\vspace{-0.1cm}
\begin{subequations} \label{eq6}
\begin{align}
\underset{\lambda \geq 0, t_i}{\text{min}}
& \quad \sum_{i=1}^{M} t_{i}\frac{1}{M} 
+ \lambda \epsilon & \\
\text{s.t.} \ & \quad t_i + \lambda d_z(\hat{z}_{i},\hat{z}_{j}) \geq h_{j}  & \forall i \in \{1,\ldots,M\}, \forall j \in \{1,\ldots,M\}. 
\end{align}
\end{subequations}
\end{thm}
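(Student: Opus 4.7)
The plan is to derive the dual \eqref{eq6} by standard Lagrangian duality applied to the primal linear program \eqref{innerprimal}. Because the primal is a finite-dimensional LP, once I verify feasibility and boundedness, strong duality will be automatic and no subtle technical issues arise.

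First I would form the Lagrangian, attaching the sign-unrestricted multipliers $t_i$ to the equality constraints \eqref{case1eqct1} and the nonnegative multiplier $\lambda \geq 0$ to the inequality constraint \eqref{case1ieqct3}:
\begin{equation*}
L(K,t,\lambda) = \sum_{i,j} h_j K_{ij} + \sum_i t_i \Bigl(\tfrac{1}{M} - \sum_j K_{ij}\Bigr) + \lambda \Bigl(\epsilon - \sum_{i,j} d_z(\hat{z}_i,\hat{z}_j) K_{ij}\Bigr).
\end{equation*}
After regrouping, this becomes $L = \tfrac{1}{M}\sum_i t_i + \lambda \epsilon + \sum_{i,j} K_{ij}\bigl(h_j - t_i - \lambda d_z(\hat{z}_i,\hat{z}_j)\bigr)$, which is affine in $K$ and decouples across the index pairs $(i,j)$.

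Next, I would compute the dual function $g(t,\lambda) = \sup_{K \geq 0} L(K,t,\lambda)$. Since each coefficient of $K_{ij}$ is independent and $K_{ij}$ ranges over $[0,\infty)$, the supremum is $+\infty$ unless every coefficient is nonpositive, i.e., $t_i + \lambda d_z(\hat{z}_i,\hat{z}_j) \geq h_j$ for all $i,j$; when these inequalities hold, the optimal $K$ makes each bracket vanish and $g(t,\lambda) = \tfrac{1}{M}\sum_i t_i + \lambda \epsilon$. Minimizing $g$ over $t \in \mathbb{R}^M$ and $\lambda \geq 0$ subject to these implicit constraints produces exactly the dual program \eqref{eq6}.

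Finally I would invoke strong LP duality. The primal is feasible because the plan $K_{ij} = \tfrac{1}{M}\mathbbm{1}_{\{i=j\}}$ satisfies the marginal constraints and incurs zero transport cost (since $d_z(\hat{z}_i,\hat{z}_i) = 0 \leq \epsilon$); it is also bounded, as the equalities \eqref{case1eqct1} force $0 \leq K_{ij} \leq 1/M$, so the linear objective \eqref{innerprimalobj} attains a finite maximum. A feasible, finite-valued LP has no duality gap, so the optima of \eqref{innerprimal} and \eqref{eq6} coincide. The only step that requires any care is the coefficient-by-coefficient supremum calculation that converts nonnegativity of $K$ into the dual inequality constraints; everything else is routine LP bookkeeping, so I do not anticipate a genuine obstacle.
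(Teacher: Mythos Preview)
Your proposal is correct and follows essentially the same Lagrangian-relaxation route as the paper: form the Lagrangian, regroup by $K_{ij}$, read off the dual constraints from the finiteness of $\sup_{K\geq 0}L$, and identify the dual objective. If anything, your version is slightly more careful than the paper's, since you explicitly check primal feasibility and boundedness to justify strong duality rather than simply asserting it.
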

\begin{proof}

The Lagrangian of problem (\ref{innerprimal}) is
\vspace{-0.1cm}
\begin{align}
L & = \sum_{j=1}^{M} h_{j}\sum_{i=1}^{M} K_{ij} + 
\sum_{i=1}^{M}t_{i}(\frac{1}{M}-\sum_{j=1}^{M} K_{ij} ) + \lambda(\epsilon-\sum_{i=1}^{M} \sum_{j=1}^{M} d_z(\hat{z}_{i},\hat{z}_{j})K_{ij}) \\
 & = 
 -\sum_{i=1}^{M} \sum_{j=1}^{M}K_{ij}(t_i + \lambda  d_z(\hat{z}_{i},\hat{z}_{j}) - h_j)
 +\sum_{i=1}^{M}t_{i}\frac{1}{M}  +\lambda \epsilon \label{lag1}
\end{align}
and the corresponding Lagrangian relaxation problem is
	\begin{align}
	\label{l_relax}
	\underset{K_{ij} \geq 0}{\text{max}} & L \ .
	\end{align}
	
\vspace{-0.122cm}

Since \eqref{innerprimal} is a linear problem and hence enjoys strong duality, the optimal value of its Lagrangian relaxation problem  \eqref{l_relax} is a finite upper bound of the optimal value of the primal \eqref{innerprimal}. The finite property holds only when the following conditions are satisfied:
\begin{equation} \label{eq:9}
t_i + \lambda d_z(\hat{z}_{i},\hat{z}_{j}) - h_j \geq 0, \forall i \in \{1,\ldots,M\}, \quad \forall j \in \{1,\ldots,M\}
\end{equation}

\vspace{-0.1cm}
Combining \eqref{l_relax} and \eqref{eq:9}, we obtain a new relaxation problem 
\begin{subequations} \label{primalint}
\begin{align}
\max_{K_{ij} \geq 0} & \quad 
-\sum_{i=1}^{M} \sum_{j=1}^{M}K_{ij}(\mu_j + t_i + \lambda d_z(\hat{z}_{i},\hat{z}_{j}))
+\sum_{i=1}^{M}t_{i}\frac{1}{M}  +\lambda \epsilon & \\
\text{s.t.} \ & \quad t_i + \lambda d_z(\hat{z}_{i},\hat{z}_{j}) - h_{j} \geq 0 & \forall i \in \{1,\ldots,M\}, \ \forall j \in \{1,\ldots,M\}
\end{align}
\end{subequations}
whose optimal value is a finite upper bound for (\ref{innerprimal}).
It is straightforward to see that an optimal solution for (\ref{primalint}) is obtained by setting $K_{ij}=0$. Therefore, the dual problem of \eqref{innerprimal} is:
\vspace{-0.12cm}
\begin{subequations} \label{eq10}
\begin{align}
\underset{\lambda \geq 0 ,t_i}{\text{min}}
& \ \sum_{i=1}^{M} t_{i}\frac{1}{M}
 +\lambda \epsilon & \\
\label{eq10:1}
\text{s.t.} \ & \ t_i + \lambda d_z(\hat{z}_{i},\hat{z}_{j}) - h_{j} \geq   0 & \quad \forall i \in \{1,\ldots,M\}, \ \forall j \in \{1,\ldots,M\}
\end{align}
\vspace{-0.1cm}
\end{subequations}
\end{proof}
\noindent
We can now provide a convex quadratic reformulation of the DR-AUC-F model.
\begin{lemma}
The DR-AUC-F model \eqref{case1t} can be equivalently reformulated as the following convex quadratic programming problem:
\vspace{-0.1cm}
\begin{equation} \label{case1final}
	\begin{aligned}
	\underset{w,\lambda \geq 0 ,t_i}{\text{min}}
	&   \frac{1}{2}\norm{w}^{2} + C(\sum_{i=1}^{M} t_{i}\frac{1}{M} 
	+ \lambda \epsilon) & \\
	\text{s.t.}
	\ &  t_i + \lambda (\norm{\hat{x}^+_{i}-\hat{x}^+_{j}}_{1} + \norm{\hat{x}^-_{i}-\hat{x}^-_{j}}_{1}) \geq 1-w^T (\hat{x}^{+}_j -\hat{x}_{j}^-) & \ 
	\forall i \in \{1,\ldots,M\}, \quad \forall j \in \{1,\ldots,M\} \\
	&  t_i + \lambda (\norm{\hat{x}^+_{i}-\hat{x}^+_{j}}_{1} + \norm{\hat{x}^-_{i}-\hat{x}^-_{j}}_{1}) \geq 0 & \ \forall i \in \{1,\ldots,M\}, \forall j \in \{1,\ldots,M\}
	\end{aligned}	
\end{equation}
\end{lemma}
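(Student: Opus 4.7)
The plan is to simply substitute the dual problem from Theorem~\ref{THM1} into the outer minimization in \eqref{case1t} and then unfold the definitions of $h_j$ and $d_z(\hat{z}_i,\hat{z}_j)$ so as to expose the structure as a convex quadratic program.

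First, I would invoke Theorem~\ref{THM1}. The inner maximization in \eqref{case1t} is the linear program \eqref{innerprimal}, and Theorem~\ref{THM1} gives its dual \eqref{eq6}. Since \eqref{innerprimal} is a feasible linear program, strong duality holds, so the inner maximum equals the minimum of \eqref{eq6}. Substituting this equality into \eqref{case1t} turns the $\min_w\max_p$ problem into
\begin{equation*}
\min_{w}\Bigl[\tfrac{1}{2}\|w\|^2 + C\min_{\lambda\geq 0,\,t_i}\bigl(\tfrac{1}{M}\sum_{i=1}^M t_i+\lambda\epsilon\bigr)\Bigr]
\quad\text{s.t.}\quad t_i+\lambda d_z(\hat{z}_i,\hat{z}_j)\geq h_j \ \ \forall i,j.
\end{equation*}
Because both layers are minimizations and the dual feasible region does not couple $w$ with $(\lambda,t)$ through the objective (the only dependence on $w$ enters via $h_j=h(w;\hat{z}_j)$ on the right-hand side of the constraints), the two $\min$ operators can be merged into a single joint minimization over $(w,\lambda,t)$.

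Next, I would unfold the two defined quantities appearing in the constraints. By definition, $d_z(\hat{z}_i,\hat{z}_j)=\|\hat{x}^+_i-\hat{x}^+_j\|_1+\|\hat{x}^-_i-\hat{x}^-_j\|_1$, which is a fixed scalar (no decision variables), so it simply replaces $d_z(\hat{z}_i,\hat{z}_j)$ in the constraint. The hinge term $h_j=\max\{0,1-w^T(\hat{x}^+_j-\hat{x}^-_j)\}$ is the one piece that hides a nonlinearity; the standard trick is to replace the single constraint $t_i+\lambda d_z(\hat{z}_i,\hat{z}_j)\geq \max\{0,1-w^T(\hat{x}^+_j-\hat{x}^-_j)\}$ by the pair of linear inequalities
\begin{align*}
t_i+\lambda d_z(\hat{z}_i,\hat{z}_j)&\geq 1-w^T(\hat{x}^+_j-\hat{x}^-_j),\\
t_i+\lambda d_z(\hat{z}_i,\hat{z}_j)&\geq 0,
\end{align*}
which together are equivalent to the max-based constraint. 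This yields exactly \eqref{case1final}.

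Finally, I would verify convexity and the quadratic nature of the reformulation. The objective $\tfrac{1}{2}\|w\|^2+C(\tfrac{1}{M}\sum_i t_i+\lambda\epsilon)$ is a convex quadratic in $(w,t,\lambda)$ (quadratic in $w$, linear in $t$ and $\lambda$). Each of the two sets of constraints is affine in $(w,t,\lambda)$ since the $L_1$ norms involve only data, not decision variables, and the remaining term $w^T(\hat{x}^+_j-\hat{x}^-_j)$ is linear in $w$. Together with the non-negativity $\lambda\geq 0$, this defines a polyhedron, confirming that \eqref{case1final} is a convex quadratic program. The only mildly delicate step is the justification that the merged single-level minimization is truly equivalent to the original nested min-max, and this is nothing more than strong LP duality applied to \eqref{innerprimal}; no sub-additional argument beyond Theorem~\ref{THM1} is required.
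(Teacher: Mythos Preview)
Your proposal is correct and follows essentially the same approach as the paper: invoke Theorem~\ref{THM1} to replace the inner maximization by its LP dual, substitute the definition of $d_z(\hat z_i,\hat z_j)$, and linearize the hinge term $h_j=\max\{0,1-w^T(\hat x_j^+-\hat x_j^-)\}$ into two affine inequalities. Your write-up is in fact more explicit than the paper's (you spell out the merging of the two minimizations and the convexity check), but the underlying argument is identical.
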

\begin{proof}
Replacing the inner maximization problem in \eqref{case1t} by its dual representation \eqref{eq6} (Theorem \ref{THM1}), linearizing the expression $h=\max\{0,1-(w^T \hat{x}^{+} - w^T \hat{x}^-)\}$, and setting $d_z(\hat{z}_i,\hat{z}_j) = \norm{\hat{x}^+_{i}-\hat{x}^+_{j}}_{1} + \norm{\hat{x}^-_{i}-\hat{x}^-_{j}}_{1}$ gives the formulation  \eqref{case1final} and provides the result we set out to prove.
\end{proof}
The size of the reformulated DR-AUC problem \eqref{case1final} is closely related to the size of the training set. For a dataset with $N^+$ positive data points and $N^-$ negative ones, thereby giving a number $M=N^+N^-$ of atoms in the empirical distribution, and a feature vector of size $d$, 
the DR-AUC-F model \eqref{case1final} has $(d+M+1)$ decision variables and ($2M^2$+1) constraints.

\subsection{DR-AUC Model with Variable Support}
\label{DR-AUC-Vsection}
In the variable support case, we assume that neither the number of atoms in the worst-case distribution nor their locations are known. We use the result presented in \cite{2} (see proof of Theorem 2 therein) and extend it here to the DR-AUC model. The corresponding optimization model DR-AUC-V reads:

\begin{equation}\label{case2t}
\textbf{DR-AUC-V}: \quad \underset{w}{\text{min}} \ \big[ \frac{1}{2}\norm{w}^{2} + C\sup_{p \in \mathcal{P} } \int_{Z}h(w;z)p(dz) \big]
\vspace{-0.1cm}
\end{equation}
\vspace{-0.14cm}
and the corresponding ambiguity set $\mathcal{P}$ is defined as:
\begin{equation}\label{case2}
\begin{aligned}
\mathcal{P} = \{p \geq 0 \ | \ \exists K \geq 0 \ \text{s.t.} 
&\sum_{i=1}^{M} K(\hat{z}_{i}, z) = p(z), \quad \forall z \in Z \\
&\int_{Z} K(\hat{z}_{i},dz) = \frac{1}{M}, \quad \forall i \in \{1,\ldots,M\} \\
&  \sum_{i=1}^{M} \int_{Z} d_z(\hat{z}_{i}, z)K(\hat{z}_{i}, dz) \leq \epsilon \} \ ,
\end{aligned}
\end{equation}
where $Z$ is the unknown support of the worst-case distribution.

\begin{thm}
	\label{THM2}
	The DR-AUC-V model is equivalent to the following convex optimization problem:
	\vspace{-0.021in}
\begin{subequations}\label{case2final}
	\begin{align}
	\label{case2final1}
	\underset{w,\lambda \geq 0 ,t_i}{\text{min}} \; &   \frac{1}{2}\norm{w}^{2} + C\left(\frac{1}{M}\sum_{i=1}^{M} t_{i} + \lambda \epsilon \right) & \\
	\label{case2final2}
	\text{s.t.} \; & t_i  \geq 1-w^T (\hat{x}^{+}_i -\hat{x}_{i}^-)  & 	\forall i \in \{1,\ldots,M\} \\
	\label{case2final3}
	& t_i  \geq 0 &  	\forall i \in \{1,\ldots,M\} \\
	\label{case2final4}
	&  \norm{w}_{\infty} \leq \lambda &
	\end{align}
\end{subequations}
	where $\norm{}_{\infty}$ denotes the infinity norm.
\end{thm}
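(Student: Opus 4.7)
The plan is to mirror the strategy used in Theorem \ref{THM1}, but adapted to the case where the support of $p$ is no longer fixed, so that the inner maximization is an infinite-dimensional LP over joint measures $K$. First I would write the inner supremum in \eqref{case2t} using the ambiguity set \eqref{case2}: the objective becomes $\sum_{i=1}^M \int_Z h(w;z)\,K(\hat{z}_i,dz)$, subject to the normalization $\int_Z K(\hat{z}_i,dz) = 1/M$ for each $i$ and the global transport-cost budget $\sum_i \int_Z d_z(\hat{z}_i,z) K(\hat{z}_i,dz) \leq \epsilon$. I would then form the Lagrangian with multipliers $t_i \in \mathbb{R}$ for the normalization constraints and $\lambda \geq 0$ for the budget constraint, following the same dualization pattern as in the proof of Theorem \ref{THM1}.

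Next I would argue that the Lagrangian is bounded above in $K \geq 0$ only if, for every $i$ and every $z \in Z$, the pointwise condition
\begin{equation*}
t_i + \lambda\, d_z(\hat{z}_i, z) \geq h(w;z)
\end{equation*}
holds, and that the supremum is then attained at $K \equiv 0$ (as in the fixed-support case). Invoking strong duality in the Wasserstein setting \cite{2}, the inner sup equals $\min_{\lambda \geq 0,\, t_i} \frac{1}{M}\sum_i t_i + \lambda\epsilon$ subject to these semi-infinite constraints. The main obstacle — and the only genuinely new piece of work compared with Theorem \ref{THM1} — is to convert the uncountably many constraints indexed by $z \in Z$ into the finitely many constraints \eqref{case2final2}--\eqref{case2final4}.

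To reduce the semi-infinite constraint, I would eliminate $z$ by computing $t_i \geq \sup_{z}\{h(w;z) - \lambda d_z(\hat{z}_i, z)\}$ in closed form. Writing $z = (x^+, x^-)$, substituting $h(w;z) = \max\{0, 1 - w^T(x^+ - x^-)\}$ and $d_z(\hat{z}_i,z) = \|x^+ - \hat{x}_i^+\|_1 + \|x^- - \hat{x}_i^-\|_1$, I would split the inner sup into the two pieces of the hinge. The piece corresponding to $h = 0$ gives $\sup_z\{-\lambda d_z(\hat{z}_i,z)\} = 0$, producing the constraint \eqref{case2final3}. For the piece $h = 1 - w^T(x^+ - x^-)$, I would change variables to $u = x^+ - \hat{x}_i^+$, $v = x^- - \hat{x}_i^-$, yielding
\begin{equation*}
1 - w^T(\hat{x}_i^+ - \hat{x}_i^-) + \sup_u\{-w^T u - \lambda\|u\|_1\} + \sup_v\{w^T v - \lambda\|v\|_1\}.
\end{equation*}
Using the standard conjugacy relation between $\|\cdot\|_1$ and $\|\cdot\|_\infty$, each of the two inner suprema equals $0$ when $\|w\|_\infty \leq \lambda$ and $+\infty$ otherwise. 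Feasibility of $\lambda$ therefore forces \eqref{case2final4}, and this piece yields exactly \eqref{case2final2}. Combining everything and absorbing the inner dual into the outer minimization over $w$ gives the announced formulation \eqref{case2final}. I do not anticipate technical difficulties beyond justifying the $\sup$-elimination step, where I would be careful to note that the supremum over $u$ and $v$ is attained only in the limit, so the condition $\|w\|_\infty \leq \lambda$ must be imposed as a feasibility constraint rather than derived from a KKT argument.
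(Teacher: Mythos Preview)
Your proof is correct and follows essentially the same approach as the paper's: dualize the inner Wasserstein supremum to obtain the semi-infinite constraints $t_i \geq \sup_{z}\{h(w;z) - \lambda\, d_z(\hat{z}_i,z)\}$, then resolve the $\sup$ in closed form. The paper outsources these two steps to Lemmas~45 and~47 of \cite{sorooshml2019regularization} (the latter phrased via the Lipschitz constant of the hinge, which is $1$), whereas you carry out the Lagrangian duality and the $\ell_1/\ell_\infty$ conjugacy computation explicitly; the underlying argument is identical.
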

\begin{proof}
	Using Lemma 45 in \cite{sorooshml2019regularization}, we can equivalently rewrite the inner maximization problem in \eqref{case2t} as:
\begin{equation} \label{inner_case2_step1}
\begin{aligned}
\underset{\lambda \geq 0 ,t_i}{\text{min}}
&   \; \frac{1}{M} \sum_{i=1}^{M} t_{i}
+ \lambda \epsilon & \\
\text{s.t.}
\ &  \; t_i  \geq \underset{z = (x^+,x^-) \in Z}{\text{sup}} h(z)-\lambda d_z(z_i, z) & \ \quad \forall i \in \{1,\ldots,M\} 
\end{aligned}.
\end{equation}
The direct application of Lemma 47 in \cite{sorooshml2019regularization} gives the following equivalent reformulation of \eqref{inner_case2_step1}: 
\begin{equation} \label{inner_case2_step2}
\begin{aligned}
\underset{\lambda \geq 0 ,t_i}{\text{min}}
&  \; \frac{1}{M} \sum_{i=1}^{M} t_{i}
+ \lambda \epsilon & \\
\text{s.t.}
\ &  \; t_i  \geq h(z_i) &  \forall i \in \{1,\ldots,M\} \\
\ & \; \text{lip(h)}\norm{w}_{\infty} \leq \lambda &
\end{aligned} \; .
\end{equation}
Since the Lipschitz constant of the hinge loss function lip(h) is $1$, the last constraint in \eqref{inner_case2_step2} reduces to  $\norm{w}_{\infty} \leq \lambda$. Incorporating the outer minimization problem in \eqref{case2t} and linearizing $h(z_i) = \text{max}\{0, 1 - (w^Tx_{i}^+ - w^Tx_{i}^-)\}$, 
we obtain \eqref{case2final}.
\end{proof}

\vspace{-0.285cm}
\section{Numerical Experiments} \label{TEST}
The goal of this section is to assess the out-of-sample performance of the proposed DR-AUC models  DR-AUC-F and DR-AUC-V, particularly when the training set is small, which has been known to be conducive to weak out-of-sample performance. 
We conduct the numerical experiments using five publicly available datasets (Table \ref{T1}) from the UCI machine learning repository \cite{uclmlrepo}. 
We benchmark the out-of-sample classification  performance and robustness of the proposed DR-AUC models with those of the deterministic AUC model (D-AUC) \eqref{aucsvm2} and the support vector machine (SVM) model \eqref{softsvm} as SVM is one of the most widely used off-the-shelf models, has some similarity with AUC models, and has been shown to be equivalent to maximizing an approximation of AUC when the data are linearly separable \cite{4}.


\begin{center}
\begin{tabular}{ |c||c|c|c|  }
 \hline
 Dataset & Number of Features & Sample Size & Application Area\\
 \hline
 Banknote Authentication (BA)   & 4    & 1372 & Finance\\
 Vertebral Column (VC)&   6  & 310   & Healthcare\\
 Pima Indians Diabetes (PID) & 8 & 768 &  Healthcare\\
 Ionosphere (Ino) & 34 & 351&  Aerospace\\
 Statlog Heart (SH) &  13  & 270 & Healthcare\\
 \hline
\end{tabular}
\vspace{-0.2cm}
\captionof{table}{Decsription of Datasets}
	\label{T1}
\end{center}

 Our test environment is Python 3.7 and AMPL 3.5.0 (CPLEX 12.9.0.0) in a Windows 10 environment. We use a grid search and a 5-fold cross-validation approach to select values for the tuning parameter $C$ in the DR-AUC, D-AUC, and SVM models and for the $\epsilon$ parameter in DR-AUC model. The candidate values used in the grid search are $[0.0001, 0.001, 0.01, 0.1, 1, 5, 10, 50]$ for $C$ in SVM and D-AUC, $[0.1, 1, 2.5, 5, 10]$ for $C$ in DR-AUC and $[0.01, 0.1, 0.5, 1, 5, 10]$ for $\epsilon$. Specifically, we split each dataset into five subsets, use any four of them to train the model and calculate the AUC value \eqref{auc} using the remaining one. We select the parameter values with the highest mean AUC 
 as the ones used in our numerical study.
 
For each dataset in Table \ref{T1}, we run 100 experiments. In every experiment, we use stratified sampling to select 60 data points as the training set and build the classification model on it.
We only select 60 points in the training set since our primary objective is to assess the out-of-sample performance of the DR-AUC models with small, and prone to out-of-sample mistakes, datasets.
Next, we apply the model to the rest of the data points and calculate the AUC value \eqref{auc} on those to obtain the out-of-sample performance.

For each model, we calculate the mean AUC over the worst 10 experiments to evaluate the model's worst-case classification performance (the higher, the better) which is indicative of the robustness of the models. 
We use the relative difference (R. Diff.) (as in \cite{droluo}) of mean AUC defined as (AUC\textsubscript{DR-AUC}-AUC\textsubscript{benchmark})/(1 - AUC\textsubscript{benchmark}) to compare the models' worst-case performance. The reason for using the relative difference is that when the benchmark model has a high AUC score (AUC \textsubscript{benchmark}), the margin for improvement is small \cite{droluo}.  
 The worst-case performance of the DR-AUC-F and DR-AUC-V models are displayed below in Table \ref{T2} and \ref{T3}.
\begin{center}
	\begin{tabular}{|c||P{2.5cm}|P{2.5cm}|P{2.6cm}|P{2.5cm}|P{2.9cm}|}
		\hline
		Dataset& SVM & D-AUC  &  \textbf{DR-AUC-F}  & R. Diff. vs SVM & R. Diff. vs D-AUC \\
		\hline
		BA & 0.9978 $\pm$ 0.0004 & 0.9979 $\pm$ 0.0004 & 0.9979 $\pm$ 0.0004  & 4.55\% & 0.00\% \\
		VC & 0.8695 $\pm$ 0.0131  & 0.8819 $\pm$ 0.0154 & 0.8818 $\pm$ 0.0158  & 9.43\% & -0.08\% \\
		PID & 0.7256 $\pm$ 0.0301 & 0.7065 $\pm$ 0.0233 & 0.7353 $\pm$ 0.0309  & 3.53\% &  9.81\% \\
		Ion & 0.7939 $\pm$ 0.0282  & 0.7974 $\pm$ 0.0179 &  0.8002 $\pm$ 0.0238 & 3.06\% & 1.38\%  \\
		SH & 0.7816 $\pm$ 0.0302  & 0.8218 $\pm$ 0.0081 & 0.8333 $\pm$ 0.0093 &  23.67\% & 6.45\%  \\
		\hline
	\end{tabular}
	\captionof{table}{Worst-Case Performance for DR-AUC-F: AUC Average and Standard Deviation for 10 Worst Experiments}
	\label{T2}
\end{center}
Table \ref{T2} shows that the DR-AUC model DR-AUC-F with fixed support \eqref{case1t} presented in Section \ref{DR-AUC-Fsection} outperforms the SVM model on all datasets in terms of the worst-case performance. Compared with the deterministic AUC model D-AUC, DR-AUC-F achieves a significantly better worst-case performance on three of the five datasets (i.e., PID, Ion, SH). The two models perform similarly on the other two. 

\vspace{-0.1in}
\begin{center}
	\begin{tabular}{|c||P{2.5cm}|P{2.5cm}|P{2.6cm}|P{2.5cm}|P{2.9cm}|}
		\hline
		Dataset& SVM & D-AUC  &  \textbf{DR-AUC-V}  & R. Diff. vs SVM & R. Diff. vs D-AUC \\
		\hline
		BA & 0.9978 $\pm$ 0.0004 & 0.9979 $\pm$ 0.0004 & 0.9980 $\pm$ 0.0005  & 9.09\% & 4.76\% \\
		VC & 0.8695 $\pm$ 0.0131  & 0.8819 $\pm$ 0.0154 & 0.8813 $\pm$ 0.0159  & 9.04\% & -0.51\% \\
		PID & 0.7256 $\pm$ 0.0301 & 0.7065 $\pm$ 0.0233 & 0.7276 $\pm$ 0.0383  & 0.73\% &  7.19\% \\
		Ion & 0.7939 $\pm$ 0.0282  & 0.7974 $\pm$ 0.0179 &  0.8057 $\pm$ 0.0117 & 5.72\% & 4.10\%  \\
		SH & 0.7816 $\pm$ 0.0302  & 0.8218 $\pm$ 0.0081 & 0.8345 $\pm$ 0.0097 &  24.22\% & 7.13\%  \\
		\hline
	\end{tabular}
	\captionof{table}{Worst-Case Performance for DR-AUC-V: AUC Average and Standard Deviation for 10 Worst Experiments.}
	\label{T3}
\end{center}
We carry out the same analysis for the DR-AUC model DR-AUC-V with variable support \eqref{case2t} presented in  Section \ref{DR-AUC-Vsection}. Table \ref{T3} shows that DR-AUC-V outperforms SVM on each dataset and is better than D-AUC on four of the five datasets. The D-AUC model is marginally better than DR-AUC-V on the remaining dataset. 

While Tables \ref{T2} and \ref{T3} show that the proposed models DR-AUC-F and DR-AUC-V generally outperform the two benchmark models D-AUC and SVM, there is no clear indication of a dominance relationship between DR-AUC-F and DR-AUC-V. The DR-AUC-V model is better than DR-AUC-F for three datasets and the reverse is true for the other two datasets (i.e., compare the fourth column of Tables \ref{T2} and \ref{T3}).

\begin{center}
	\begin{tabular}{|c||P{2.7cm}|P{2.7cm}|P{2.7cm}|P{2.7cm}|}
		\hline
		Dataset & SVM & D-AUC  &  DR-AUC-F & DR-AUC-V  \\
		\hline
		BA & 0.9991 $\pm$ 0.0006 & 0.9992 $\pm$ 0.0006 & 0.9992 $\pm$ 0.0005 & 0.9992 $\pm$ 0.0006  \\
		VC & 0.9145 $\pm$ 0.0194  & 0.9191 $\pm$ 0.0164 &  0.9191 $\pm$ 0.0165 & 0.9191 $\pm$ 0.0166 \\
		PID & 0.7856 $\pm$ 0.0262 & 0.7852 $\pm$ 0.0328 &  0.7907 $\pm$ 0.0254 & 0.7880 $\pm$ 0.0282   \\
		Ion & 0.8511 $\pm$ 0.0230  & 0.8525 $\pm$ 0.0286 & 0.8555 $\pm$ 0.0291 & 0.8583 $\pm$ 0.0272    \\
		SH & 0.8528 $\pm$ 0.0338  & 0.8698 $\pm$ 0.0230 & 0.8749 $\pm$ 0.0194 & 0.8753 $\pm$ 0.0195   \\
		\hline
	\end{tabular}
	\captionof{table}{Overall Performance: The Average and Standard Deviation of AUC among the 100 Experiments }
	\label{T4}
\end{center}
In Table \ref{T4}, we analyze the overall performance among all the 100 experiments for each model. We see that both DR-AUC-F and DR-AUC-V outperform the benchmark models SVM and D-AUC on three datasets (i.e., PID, Ion and SH). The two DR-AUC models tie with D-AUC and slightly perform better than SVM on the other two datasets BA and VC. The differences are  less obvious than for the worst-case analysis presented in Tables \ref{T2} and \ref{T3}. The two proposed models DR-AUC-F and DR-AUC-V have extremely similar performance profiles (see columns 4 and 5 in Table \ref{T4}).

	In summary, the result above demonstrates that the proposed DR-AUC models are more robust and have a better worst-case performance than the SVM and deterministic AUC models. 
\vspace{0.3cm}	

	\textbf{Acknowledgments}: Miguel Lejeune was partially supported by the Office of Naval Research, Grant \#N000141712420.

\printbibliography

\end{document}